\newtheorem{thm}{Theorem}[section]
\newtheorem{lem}[thm]{Lemma}
\theoremstyle{definition}
\newtheorem{rem}[thm]{Remark}
\newcommand{\R}{\mathbb R}
\newcommand{\C}{\mathbb C}
\newif\ifpdf \pdftrue
\begin{document}

\title{A note on Taylor expansion of real funtion}

\author{Shun Tang}

\date{}

\maketitle

\vspace{-10mm}

\hspace{5cm}\hrulefill\hspace{5.5cm} \vspace{5mm}

\textbf{Abstract.} Let $f(x)$ be a real function which has $(n+1)$-th derivative on an interval $[a, b]$. For any point $x_0\in (a, b)$ and any integer $0\leq k\leq n$, denote by $S_{k,x_0}(x)$ the $k$-th truncation of the Taylor expansion of $f(x)$ at $x_0$, i.e. $$S_{k,x_0}(x)=\sum_{i=0}^k\frac{f^{(i)}(x_0)}{i!}(x-x_0)^i.$$ In this note, we consider the $L_2$-approximation of $f(x)$ by polynomials of degree $\leq k$, we show that $S_{k,x_0}(x)$ is the limit of the best approximations of $f(x)$ on $[x_0-\varepsilon, x_0+\varepsilon]$ as $\varepsilon\to 0$.

\textbf{2020 Mathematics Subject Classification:} 41A10, 41A52, 41A58


\section{Introduction}
Given a polynomial $P(x)$ with real coefficients, the $L_2$-distance between $f(x)$ and $P(x)$ in the space of continuous functions on $[a, b]$ is defined as 
$$\| f(x)-P(x)\|_2:=\Big( \int_a^b [f(x)-P(x)]^2 {\rm d}x \Big)^{1/2}.$$
Let $\R[x]_{\leq k}$ denote the set of real polynomials of degree less than or equal to $k$. It is a classical problem to find the best approximation of $f(x)$ in $\R[x]_{\leq k}$. To the  $L_2$-approximation of a periodic function by trigonometric polynomials, it is well known that the corresponding truncation of its Fourier expansion provides on one period the best approximation of any fixed degree, because the set of functions $\{\frac{1}{2}, \cos x, \sin x, \cdots, \cos kx, \sin kx, \cdots\}$ is an orthogonal basis with respect to the $L_2$ inner product. The Taylor expansion does not have similar property. For instance, consider the Taylor expansion of $f(x)=e^x$ at the point $x_0=0$, the constant term $e^0=1$ does not provide the best $L_2$-approximation of $e^x$ by degree-$0$ polynomials on any interval with central point $x_0=0$ e.g. on $[-1, 1]$ we have 
$$\Big( \int_{-1}^1 (e^x-\frac{e-1/e}{2})^2 {\rm d}x \Big)^{1/2}<\Big( \int_{-1}^1 (e^x-1)^2 {\rm d}x \Big)^{1/2}.$$
However, the Taylor expansion of a differentiable function has deep relation with its best local $L_2$-approximations by polynomials. We shall prove that when the closed neighborhood of $x_0$ gets smaller and smaller, on which the best $L_2$-approximation gets closer and closer to the truncation of the Taylor expansion. Our main theorem is the following.

\begin{thm}\label{A}
Let $f(x)$ be a real function which has $(n+1)$-th derivative on an interval $[a, b]$. Fix a point $x_0\in (a, b)$ and fix an integer $0\leq k\leq n$. Write the best $L_2$-approximation of $f(x)$ in $\R[x]_{\leq k}$ on a closed neighborhood $[x_0-\varepsilon, x_0+\varepsilon]\subset [a, b]$ with $\varepsilon>0$ in the form $$P_{k, \varepsilon}(x)=\sum_{i=0}^k a_{i, \varepsilon}(x-x_0)^i,$$ then we have $\lim\limits_{\varepsilon\to 0} a_{i, \varepsilon}=\frac{f^{(i)}(x_0)}{i!}$ for any $0\leq i\leq k$.
\end{thm}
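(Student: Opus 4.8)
The plan is to remove the shrinking interval by an affine rescaling, and then exploit the fact that $L_2$-best approximation onto a finite-dimensional subspace is an orthogonal projection --- hence a linear operator of norm $1$ that fixes $\R[x]_{\leq k}$ pointwise. First I would substitute $x=x_0+\varepsilon t$, $t\in[-1,1]$. Since $\int_{x_0-\varepsilon}^{x_0+\varepsilon}(f(x)-\tilde Q(x))^2\,{\rm d}x=\varepsilon\int_{-1}^1(g_\varepsilon(t)-Q(t))^2\,{\rm d}t$, where $g_\varepsilon(t):=f(x_0+\varepsilon t)$ and $Q(t):=\tilde Q(x_0+\varepsilon t)$, and since $\tilde Q\mapsto Q$ is a linear bijection of $\R[x]_{\leq k}$ onto $\R[t]_{\leq k}$, the best $L_2$-approximation of $g_\varepsilon$ on $[-1,1]$ is exactly $P_{k,\varepsilon}(x_0+\varepsilon t)=\sum_{i=0}^k a_{i,\varepsilon}\varepsilon^i t^i$. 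Writing $b_{i,\varepsilon}:=a_{i,\varepsilon}\varepsilon^i$, the theorem reduces to the claim that $b_{i,\varepsilon}=\frac{f^{(i)}(x_0)}{i!}\varepsilon^i+o(\varepsilon^k)$ as $\varepsilon\to0$, because then $a_{i,\varepsilon}=b_{i,\varepsilon}/\varepsilon^i=\frac{f^{(i)}(x_0)}{i!}+o(\varepsilon^{k-i})\to\frac{f^{(i)}(x_0)}{i!}$ for every $0\leq i\leq k$.

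Next I would invoke Taylor's theorem at $x_0$ with the Peano remainder (legitimate because $k\leq n$, so $f$ is more than $k$ times differentiable near $x_0$): $f(x_0+s)=\sum_{i=0}^k\frac{f^{(i)}(x_0)}{i!}s^i+\phi(s)s^k$ with $\phi(s)\to0$ as $s\to0$. Hence $g_\varepsilon(t)=\sum_{i=0}^k\frac{f^{(i)}(x_0)}{i!}\varepsilon^i t^i+\rho_\varepsilon(t)$, where for $t\in[-1,1]$ one has $|\rho_\varepsilon(t)|=|\phi(\varepsilon t)|\,|\varepsilon t|^k\leq\varepsilon^k\sup_{|s|\leq\varepsilon}|\phi(s)|$, so that $\|\rho_\varepsilon\|_{2}=o(\varepsilon^k)$ on $[-1,1]$. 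Let $T_k$ denote the orthogonal projection of $L_2[-1,1]$ onto $\R[t]_{\leq k}$; it is linear, $\|T_k\|=1$, and it is the identity on $\R[t]_{\leq k}$. Applying $T_k$ to the displayed expansion of $g_\varepsilon$ gives $\sum_{i=0}^k b_{i,\varepsilon}t^i=\sum_{i=0}^k\frac{f^{(i)}(x_0)}{i!}\varepsilon^i t^i+T_k\rho_\varepsilon$, with $\|T_k\rho_\varepsilon\|_2\leq\|\rho_\varepsilon\|_2=o(\varepsilon^k)$. Finally, since all norms on the finite-dimensional space $\R[t]_{\leq k}$ are equivalent, each coefficient is an $L_2$-bounded linear functional on it, so writing $T_k\rho_\varepsilon=\sum_{i=0}^k d_{i,\varepsilon}t^i$ we get $|d_{i,\varepsilon}|\leq C\|T_k\rho_\varepsilon\|_2=o(\varepsilon^k)$, whence $b_{i,\varepsilon}=\frac{f^{(i)}(x_0)}{i!}\varepsilon^i+d_{i,\varepsilon}$ has the required form.

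Equivalently --- and this is the concrete implementation of the same argument --- one may write the normal equations $\sum_{i=0}^k b_{i,\varepsilon}\int_{-1}^1 t^{i+j}\,{\rm d}t=\int_{-1}^1 g_\varepsilon(t)t^j\,{\rm d}t$ for $0\leq j\leq k$, whose coefficient matrix is a fixed invertible matrix independent of $\varepsilon$ (diagonal if one uses the Legendre basis, the $L_2[-1,1]$-analogue of the orthogonal trigonometric basis mentioned in the introduction), substitute the Taylor expansion of $g_\varepsilon$ into the right-hand side, and read off $\vec b_\varepsilon=\vec v_\varepsilon+o(\varepsilon^k)$ with $v_{i,\varepsilon}=\frac{f^{(i)}(x_0)}{i!}\varepsilon^i$.

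The one point that genuinely needs care is the last passage: after rescaling the Taylor remainder is only $o(\varepsilon^k)$ in $L_2[-1,1]$, yet we must still divide the $i$-th coefficient by $\varepsilon^i$; this is harmless precisely because $i\leq k$, which is exactly where the hypothesis on the degree (and $k\leq n$, ensuring enough differentiability for a genuine $o(s^k)$ Taylor remainder) is used. The remaining ingredients --- existence and uniqueness of $P_{k,\varepsilon}$ as the orthogonal projection onto a closed finite-dimensional subspace of a Hilbert space, the norm-$1$ bound for that projection, and the uniform-in-$t$ control of $\rho_\varepsilon$ on $[-1,1]$ --- are routine.
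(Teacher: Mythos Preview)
Your argument is correct and is genuinely different from the paper's. The key simplification is the affine rescaling $x=x_0+\varepsilon t$: after it, the Gram matrix of $\{1,t,\dots,t^k\}$ on $[-1,1]$ is fixed and nonsingular, so the whole $\varepsilon$-dependence is pushed into the right-hand side, and the projection bound $\|T_k\rho_\varepsilon\|_2\le\|\rho_\varepsilon\|_2=o(\varepsilon^k)$ together with equivalence of norms on $\R[t]_{\le k}$ finishes the job. The paper, by contrast, works directly on $[x_0-\varepsilon,x_0+\varepsilon]$: it writes the normal equations $A_{k+1}X=W_{k+1}$ with the $\varepsilon$-dependent Gram matrix, proves $\det A_{k+1}>0$ by an elementary row/column reduction to Cauchy-type determinants (Lemma~2.1), shows that the $(r,s)$-entry of $A_{k+1}^{-1}$ has the form $\alpha_{rs}\varepsilon^{-(r+s-1)}$ with $\alpha_{rs}$ independent of $\varepsilon$ (Lemma~2.2), expands $f$ to order $k+1$ so that the remainder is $G_k(x)(x-x_0)^{k+1}$ with $G_k$ continuous and bounded, and then reads off $|a_{i,\varepsilon}-f^{(i)}(x_0)/i!|=O(\varepsilon^{\,k+1-i})$. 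Your route is shorter and more conceptual, avoids the determinant lemma entirely, and in fact uses only $k$-fold differentiability (Peano remainder) rather than the $(k+1)$-st derivative; the paper's route is fully elementary, yields the slightly sharper rate $O(\varepsilon^{\,k+1-i})$ instead of your $o(\varepsilon^{\,k-i})$, and makes explicit the structure of $A_{k+1}^{-1}$. Your ``concrete implementation'' paragraph is essentially the paper's computation transported to $[-1,1]$, where the coefficient matrix no longer depends on $\varepsilon$.
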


The existence of the best $L_2$-approximation of $f(x)$ in $\R[x]_{\leq k}$ on any subinterval of $[a, b]$ is a well-known fact. The most convenient way to find them out is to transform the linear independent set $\{1, x, x^2, \cdots\}$ to an orthogonal basis via the Gram-Schmidt process with respect to the $L_2$ inner product. The resulting orthogonal polynomials are just the shifted Legendre polynomials up to a constant multiplication factor. In this note, we shall give a direct proof of Theorem~\ref{A} without using Legendre polynomials. 

\section{Proof of Theorem~\ref{A}}
We define $$J(a_{0, \varepsilon}, a_{1, \varepsilon}, \ldots, a_{k, \varepsilon})=\int_{x_0-\varepsilon}^{x_0+\varepsilon} [f(x)-\sum_{i=0}^k a_{i, \varepsilon}(x-x_0)^i]^2 {\rm d}x.$$ To find the potential extreme points of $J$ is to solve the following system of equations
\begin{equation}\label{e1}
\begin{cases}
\frac{\partial J}{\partial a_{0, \varepsilon}}=0\\
\frac{\partial J}{\partial a_{1, \varepsilon}}=0\\
\quad\vdots\\
\frac{\partial J}{\partial a_{k, \varepsilon}}=0.\\
\end{cases}
\end{equation}
We compute
\begin{align*}
\frac{\partial J}{\partial a_{i, \varepsilon}}&=-2\int_{x_0-\varepsilon}^{x_0+\varepsilon} [f(x)-\sum_{j=0}^k a_{j, \varepsilon}(x-x_0)^j](x-x_0)^i {\rm d}x\\
&=-2\int_{x_0-\varepsilon}^{x_0+\varepsilon}f(x)(x-x_0)^i {\rm d}x+2\int_{x_0-\varepsilon}^{x_0+\varepsilon}\sum_{j=0}^k a_{j, \varepsilon}(x-x_0)^{i+j} {\rm d}x\\
&=2\sum_{j=0}^k\frac{a_{j, \varepsilon}}{i+j+1}[\varepsilon^{i+j+1}-(-\varepsilon)^{i+j+1}]-2\int_{x_0-\varepsilon}^{x_0+\varepsilon}f(x)(x-x_0)^i {\rm d}x.
\end{align*}
Equation (1) is a system of linear equations of $k+1$ variables, we write it as a matrix equation $A_{k+1}\cdot X=W_{k+1}$. Then $A_{k+1}$ is a $(k+1)\times(k+1)$-matrix in which $a_{rs}=\frac{1}{r+s-1}[\varepsilon^{r+s-1}-(-\varepsilon)^{r+s-1}]$ and $W_{k+1}$ is a $(k+1)$-vector in which $w_r=\int_{x_0-\varepsilon}^{x_0+\varepsilon}f(x)(x-x_0)^{r-1} {\rm d}x$. For example, for $k=4$,
\begin{equation*}
A_5=\left(
\begin{array}{ccccc}
2\varepsilon & 0 & \frac{2}{3}\varepsilon^3 & 0 & \frac{2}{5}\varepsilon^5 \\
0 & \frac{2}{3}\varepsilon^3 & 0 & \frac{2}{5}\varepsilon^5 & 0 \\
\frac{2}{3}\varepsilon^3 & 0 & \frac{2}{5}\varepsilon^5 & 0 &  \frac{2}{7}\varepsilon^7 \\
0 & \frac{2}{5}\varepsilon^5 & 0 & \frac{2}{7}\varepsilon^7 & 0 \\
\frac{2}{5}\varepsilon^5 & 0 & \frac{2}{7}\varepsilon^7 & 0 & \frac{2}{9}\varepsilon^9 \\
\end{array}
\right).
\end{equation*}

Note that $A_{k+1}$ is also the Hessian matrix of $J$. The positive definiteness of $A_{k+1}$ follows from its inner-product expression $a_{rs}=\langle (x-x_0)^{r-1}, (x-x_0)^{s-1}\rangle_{L_2}$ and the fact that the set of functions $\{1, (x-x_0), (x-x_0)^2, \cdots, (x-x_0)^k\}$ is linear independent. In this note, we include an elementary proof of the positive definiteness of $A_{k+1}$, we shall directly prove ${\rm det}(A_{k+1})>0$ for all $k\geq 0$.

Firstly, set 
\begin{equation*}
\widetilde{A}_{k+1}:=\left(
\begin{array}{ccccc}
1 & 0 & \frac{1}{3} & \cdots & \frac{1-(-1)^{k+1}}{2(k+1)} \\
0 & \frac{1}{3} & 0 & \cdots & \frac{1-(-1)^{k+2}}{2(k+2)} \\
\frac{1}{3} & 0 & \frac{1}{5} & \cdots & \frac{1-(-1)^{k+3}}{2(k+3)}\\
\vdots & \vdots & \vdots & \ddots & \vdots \\
\frac{1-(-1)^{k+1}}{2(k+1)} & \frac{1-(-1)^{k+2}}{2(k+2)} & \frac{1-(-1)^{k+3}}{2(k+3)} & \cdots & \frac{1-(-1)^{2k+1}}{2(2k+1)} \\
\end{array}
\right),
\end{equation*}
one computes 
\begin{align*}
{\rm det}(A_{k+1})&=\sum_{\sigma\in S_{k+1}}(-1)^{{\rm sgn}(\sigma)}a_{1\sigma(1)}a_{2\sigma(2)}\cdots a_{(k+1)\sigma(k+1)}\\
&=2^{k+1}\varepsilon^{\Big(\sum_{l=1}^{k+1}\big(l+\sigma(l)-1\big)\Big)}{\rm det}(\widetilde{A}_{k+1})\\
&=2^{k+1}\varepsilon^{\big(2(\sum_{l=1}^{k+1}l)-(k+1)\big)}{\rm det}(\widetilde{A}_{k+1})\\
&=2^{k+1}\varepsilon^{(k+1)^2}{\rm det}(\widetilde{A}_{k+1})
\end{align*}
where $S_{k+1}$ denotes the $(k+1)$-th symmetric group. Since $\varepsilon>0$, we only need to prove that ${\rm det}(\widetilde{A}_{k+1})>0$ for all $k\geq 0$.

Set
\begin{equation*}
u=
\begin{cases}
\frac{k}{2}, & \text{ if } k \text{ is even} \\
\frac{k+1}{2}, & \text{ if } k \text{ is odd}\\
\end{cases},
\quad\quad
v=
\begin{cases}
\frac{k}{2}+1, & \text{ if } k \text{ is even} \\
\frac{k+1}{2}, & \text{ if } k \text{ is odd}\\
\end{cases}
\end{equation*} 
and choose a permutation $\sigma\in S_{k+1}$ which sends $(1, 2, \cdots, k+1)$ to $(2, 4, \cdots, 2u, 1, 3, \cdots, 2v-1)$. 

Apply the elementary row operations corresponding to $\sigma$ to the matrix $\widetilde{A}_{k+1}$ and apply the elementary column operations corresponding to $\sigma$ to the resulting matrix, $\widetilde{A}_{k+1}$ is then transformed to a partitioned matrix
\begin{equation*}
\left(
\begin{array}{cc}
B_u & 0 \\
0 & C_v\\
\end{array}
\right)
\end{equation*}
whose determinant is equal to ${\rm det}(\widetilde{A}_{k+1})$. The block $B_u$ is a $u\times u$-matrix, $C_v$ is a $v\times v$-matrix. They are of the following forms
\begin{equation*}
B_u=
\left(
\begin{array}{cccc}
\frac{1}{3} & \frac{1}{5} & \cdots & \frac{1}{2u+1}\\
\frac{1}{5} & \frac{1}{7} & \cdots & \frac{1}{2u+3}\\
\vdots & \vdots & \ddots & \vdots \\
\frac{1}{2u+1} & \frac{1}{2u+3} & \cdots & \frac{1}{4u-1}\\
\end{array}
\right),
\quad\quad
C_v=
\left(
\begin{array}{cccc}
1 & \frac{1}{3} & \cdots & \frac{1}{2v-1}\\
\frac{1}{3} & \frac{1}{5} & \cdots & \frac{1}{2v+1}\\
\vdots & \vdots & \ddots & \vdots \\
\frac{1}{2v-1} & \frac{1}{2v+1} & \cdots & \frac{1}{4v-3}\\
\end{array}
\right).
\end{equation*}
If we can prove ${\rm det}(B_u)>0$ and ${\rm det}(C_v)>0$ for all $u$ and all $v$, then we get ${\rm det}(\widetilde{A}_{k+1})>0$ for all $k\geq 0$. This can be seen from the following lemma.

\begin{lem}\label{B}
For any integer $i\geq 1$ and any integer $t\geq 0$, the determinant of the following matrix 
\begin{equation*}
D(i, t)=
\left(
\begin{array}{ccccc}
\frac{1}{i} & \frac{1}{i+2} & \frac{1}{i+4} & \cdots & \frac{1}{i+2t}\\
\frac{1}{i+2} & \frac{1}{i+4} & \frac{1}{i+6} & \cdots & \frac{1}{i+2(t+1)}\\
\frac{1}{i+4} & \frac{1}{i+6} & \frac{1}{i+8} & \cdots & \frac{1}{i+2(t+2)}\\
\vdots & \vdots & \vdots& \ddots & \vdots \\
\frac{1}{i+2t} & \frac{1}{i+2(t+1)} & \frac{1}{i+2(t+2)} & \cdots & \frac{1}{i+4t}\\
\end{array}
\right)
\end{equation*}
is always positive.
\end{lem}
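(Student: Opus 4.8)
The plan is to recognize $D(i,t)$ as a scalar multiple of a Cauchy matrix and then to apply Cauchy's determinant identity (which one may quote, or re-derive inside the note to keep it self-contained). Index the rows and columns of $D(i,t)$ by $0,1,\dots,t$, so that its $(p,q)$-entry is $\frac{1}{i+2p+2q}=\frac12\cdot\frac{1}{x_p+y_q}$ with $x_p:=\frac{i}{2}+p$ and $y_q:=q$. Since every entry carries the common factor $\frac12$ and the matrix has size $t+1$, we obtain ${\rm det}\,D(i,t)=2^{-(t+1)}\,{\rm det}\big(\frac{1}{x_p+y_q}\big)_{0\le p,q\le t}$. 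Moreover $x_0,\dots,x_t$ are pairwise distinct, $y_0,\dots,y_t$ are pairwise distinct, and every sum $x_p+y_q=\frac{i}{2}+p+q$ is strictly positive because $i\ge1$; so the matrix on the right is genuinely of Cauchy type, and it suffices to prove that its determinant is positive.

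The second step is Cauchy's formula
\[
{\rm det}\Big(\frac{1}{x_p+y_q}\Big)_{0\le p,q\le t}=\frac{\prod_{0\le p<p'\le t}(x_{p'}-x_p)\ \prod_{0\le q<q'\le t}(y_{q'}-y_q)}{\prod_{0\le p,q\le t}(x_p+y_q)}.
\]
If one wants to avoid citing it, the identity follows from the usual two-step reduction: subtract the last column from every other column, using $\frac{1}{x_p+y_q}-\frac{1}{x_p+y_t}=\frac{y_t-y_q}{(x_p+y_q)(x_p+y_t)}$; pull the factor $y_t-y_q$ out of column $q$ (for $q<t$) and the factor $\frac{1}{x_p+y_t}$ out of row $p$; then perform the symmetric operation on the rows and expand along the column all but one of whose entries are now zero. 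This expresses the $(t+1)\times(t+1)$ Cauchy determinant as an explicit product of linear terms times the $t\times t$ Cauchy determinant in the nodes $x_0,\dots,x_{t-1}$ and $y_0,\dots,y_{t-1}$, so induction on $t$ (base case ${\rm det}\big(\frac{1}{x_0+y_0}\big)=\frac{1}{x_0+y_0}>0$) yields the displayed closed form.

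The final step is to read off the sign. For $p'>p$ we have $x_{p'}-x_p=p'-p>0$, for $q'>q$ we have $y_{q'}-y_q=q'-q>0$, and each $x_p+y_q=\frac{i}{2}+p+q>0$; hence the right-hand side of Cauchy's formula is a quotient of positive real numbers, and multiplying by $2^{-(t+1)}>0$ gives ${\rm det}\,D(i,t)>0$, which is the assertion of the lemma.

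I do not expect a real obstacle here: once the Cauchy structure is spotted the conclusion is forced, and the only mildly tedious point is the bookkeeping in the inductive derivation of Cauchy's identity --- keeping track of which linear factor is extracted from which row and which column --- which is completely routine. As a remark, $D(i,t)$ is also the Gram matrix of the linearly independent polynomials $1,x^2,x^4,\dots,x^{2t}$ for the positive-definite inner product $\langle f,g\rangle=\int_0^1 f(x)g(x)x^{i-1}\,{\rm d}x$ (indeed $\langle x^{2p},x^{2q}\rangle=\frac{1}{i+2p+2q}$), which makes its positive definiteness immediate; but since this is exactly the inner-product argument that the note deliberately sidesteps for $A_{k+1}$, I would retain the direct determinant computation in the text.
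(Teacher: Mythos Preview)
Your proof is correct: the Cauchy-matrix identification and the sign analysis are both sound. The paper, however, proves the lemma by a different route. Rather than invoking Cauchy's identity, it performs a sequence of elementary row and column operations on $D(i,t)$ (multiplying rows and columns by positive scalars, then subtracting the first row from the others) to show that the sign of ${\rm det}\,D(i,t)$ equals the sign of ${\rm det}\,D(i+4,t-1)$; iterating this reduction brings everything down to the $1\times1$ and $2\times2$ base cases, which are verified by hand. Your Cauchy approach is exactly what the paper records, with the parametrization $a_p=i+2(p-1)$, $b_q=2(q-1)$ (avoiding the half-integers), in the remark immediately following the lemma. The trade-off is clear: the paper's reduction stays fully elementary and self-contained at the level of a first linear-algebra course, whereas the Cauchy formula delivers not just the sign but an explicit closed form for the determinant in one stroke. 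Your closing Gram-matrix observation is also apt and matches the paper's stated reason for preferring the hands-on argument.
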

\begin{proof}
For any integer $i\geq 1$, ${\rm det}\big(D(i, 0)\big)=\frac{1}{i}>0$ and ${\rm det}\big(D(i, 1)\big)=\frac{1}{i(i+4)}-\frac{1}{(i+2)^2}>0$. For general $t\geq 2$, we firstly multiply the $j$-th row of $D(i, t)$ by a constant $i+2(j-1)>0$ from $j=1$ to $j=t+1$, then we get a matrix
\begin{equation*}
\left(
\begin{array}{ccccc}
1 & \frac{i}{i+2} & \frac{i}{i+4} & \cdots & \frac{i}{i+2t}\\
1 & \frac{i+2}{i+4} & \frac{i+2}{i+6} & \cdots & \frac{i+2}{i+2(t+1)}\\
1 & \frac{i+4}{i+6} & \frac{i+4}{i+8} & \cdots & \frac{i+4}{i+2(t+2)}\\
\vdots & \vdots & \vdots& \ddots & \vdots \\
1 & \frac{i+2t}{i+2(t+1)} & \frac{i+2t}{i+2(t+2)} & \cdots & \frac{i+2t}{i+4t}\\
\end{array}
\right)
\end{equation*}
whose determinant has the same sign as ${\rm det}\big(D(i, t)\big)$. Next, we multiply the $j$-th column of this matrix by a constant $i+2(j-1)>0$ from $j=2$ to $j=t+1$, then we get the following matrix
\begin{equation*}
\left(
\begin{array}{ccccc}
1 & i & i & \cdots & i\\
1 & \frac{(i+2)^2}{i+4} & \frac{(i+2)(i+4)}{i+6} & \cdots & \frac{(i+2)(i+2t)}{i+2(t+1)}\\
1 & \frac{(i+2)(i+4)}{i+6} & \frac{(i+4)^2}{i+8} & \cdots & \frac{(i+4)(i+2t)}{i+2(t+2)}\\
\vdots & \vdots & \vdots& \ddots & \vdots \\
1 & \frac{(i+2)(i+2t)}{i+2(t+1)} & \frac{(i+4)(i+2t)}{i+2(t+2)} & \cdots & \frac{(i+2t)^2}{i+4t}\\
\end{array}
\right)
\end{equation*}
whose determinant also has the same sign as ${\rm det}\big(D(i, t)\big)$. 

Adding a $-1$ multiple of the first row to the $j$-th row from $j=2$ to $j=t+1$, we get 
\begin{equation*}
\left(
\begin{array}{ccccc}
1 & i & i & \cdots & i\\
0 & \frac{4}{i+4} & \frac{8}{i+6} & \cdots & \frac{4t}{i+2(t+1)}\\
0 & \frac{8}{i+6} & \frac{16}{i+8} & \cdots & \frac{8t}{i+2(t+2)}\\
\vdots & \vdots & \vdots& \ddots & \vdots \\
0 & \frac{4t}{i+2(t+1)} & \frac{8t}{i+2(t+2)} & \cdots & \frac{4t^2}{i+4t}\\
\end{array}
\right)
\end{equation*}
so that the sign of ${\rm det}\big(D(i, t)\big)$ is the same as the sign of the determinant of 
\begin{equation*}
\left(
\begin{array}{cccc}
\frac{4}{i+4} & \frac{8}{i+6} & \cdots & \frac{4t}{i+2(t+1)}\\
\frac{8}{i+6} & \frac{16}{i+8} & \cdots & \frac{8t}{i+2(t+2)}\\
\vdots & \vdots& \ddots & \vdots \\
\frac{4t}{i+2(t+1)} & \frac{8t}{i+2(t+2)} & \cdots & \frac{4t^2}{i+4t}\\
\end{array}
\right),
\end{equation*}
and hence is the same as the sign of the determinant of 
\begin{equation*}
\left(
\begin{array}{cccc}
\frac{1}{i+4} & \frac{2}{i+6} & \cdots & \frac{t}{i+2(t+1)}\\
\frac{1}{i+6} & \frac{2}{i+8} & \cdots & \frac{t}{i+2(t+2)}\\
\vdots & \vdots& \ddots & \vdots \\
\frac{1}{i+2(t+1)} & \frac{2}{i+2(t+2)} & \cdots & \frac{t}{i+4t}\\
\end{array}
\right).
\end{equation*}

Multiplying the $j$-th column of the above matrix by $\frac{1}{j}$ from $j=2$ to $j=t$, we may conclude that the sign of ${\rm det}\big(D(i, t)\big)$ is the same as the sign of ${\rm det}\big(D(i+4, t-1)\big)$. Repeating these manipulations, we know that ${\rm det}\big(D(i, t)\big)$ has the same sign as ${\rm det}\big(D(i+4(t-1), 1)\big)$ which is positive. So we are down.
\end{proof}
 
\begin{rem}\label{B+}
The matrices $D(i, t)$ are special examples of Cauchy matrix $\big(\frac{1}{a_p+b_q}\big)_{(t+1)\times (t+1)}$ where $a_p=i+2(p-1)$ and $b_q=2(q-1)$. From the calculation formula of the determinant of Cauchy matrix 
$${\rm det}\big(\frac{1}{a_p+b_q}\big)=\frac{\prod\limits_{1\leq p<q\leq t+1}(a_p-a_q)(b_p-b_q)}{\prod\limits_{1\leq p,q\leq t+1}(a_p+b_q)},$$
one easily sees ${\rm det}\big(D(i, t)\big)>0$.
\end{rem}

To compute the limit of the solutions of $A_{k+1}\cdot X=W_{k+1}$ as $\varepsilon\to 0$, we describe the elements in $A_{k+1}^{-1}$.

\begin{lem}\label{C}
For any $1\leq r, s\leq k+1$, the element in $A_{k+1}^{-1}$ at the place $(r, s)$ is of the form $\alpha_{rs}\cdot (\frac{1}{\varepsilon})^{r+s-1}$ where $\alpha_{rs}$ is a real number depending only on $k$. If $r+s$ is an odd number, then $\alpha_{rs}=0$.
\end{lem}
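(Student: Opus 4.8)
The plan is to isolate the dependence on $\varepsilon$ by a diagonal conjugation bringing $A_{k+1}$ back to the constant matrix $\widetilde{A}_{k+1}$. Put $E_\varepsilon=\mathrm{diag}(1,\varepsilon,\varepsilon^2,\dots,\varepsilon^k)$. Using
$$a_{rs}=\frac{\varepsilon^{r+s-1}-(-\varepsilon)^{r+s-1}}{r+s-1}=2\varepsilon^{r+s-1}\cdot\frac{1-(-1)^{r+s-1}}{2(r+s-1)},$$
a direct comparison of entries gives $A_{k+1}=2\varepsilon\,E_\varepsilon\,\widetilde{A}_{k+1}\,E_\varepsilon$. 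Since ${\rm det}(\widetilde{A}_{k+1})>0$ was already established (via Lemma~\ref{B}), $\widetilde{A}_{k+1}$ is invertible, its inverse has entries involving only $k$, and
$$A_{k+1}^{-1}=\frac{1}{2\varepsilon}\,E_\varepsilon^{-1}\,\widetilde{A}_{k+1}^{-1}\,E_\varepsilon^{-1},\qquad\text{so}\qquad (A_{k+1}^{-1})_{rs}=\frac{1}{2}\,(\widetilde{A}_{k+1}^{-1})_{rs}\cdot\Big(\tfrac{1}{\varepsilon}\Big)^{r+s-1}.$$
This gives the first assertion with $\alpha_{rs}=\tfrac12(\widetilde{A}_{k+1}^{-1})_{rs}$.

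For the vanishing when $r+s$ is odd, I would use the checkerboard pattern of $\widetilde{A}_{k+1}$: its $(r,s)$-entry $\frac{1-(-1)^{r+s-1}}{2(r+s-1)}$ is zero whenever $r+s$ is odd. Let $\Lambda=\mathrm{diag}((-1)^1,(-1)^2,\dots,(-1)^{k+1})$, so $\Lambda^2=I$. Then $(\Lambda\widetilde{A}_{k+1}\Lambda)_{rs}=(-1)^{r+s}(\widetilde{A}_{k+1})_{rs}$, which equals $(\widetilde{A}_{k+1})_{rs}$ in every case: the sign is trivial when $r+s$ is even and the entry vanishes otherwise. Hence $\Lambda\widetilde{A}_{k+1}\Lambda=\widetilde{A}_{k+1}$, and taking inverses, $\Lambda\widetilde{A}_{k+1}^{-1}\Lambda=\widetilde{A}_{k+1}^{-1}$; comparing $(r,s)$-entries gives $\big((-1)^{r+s}-1\big)(\widetilde{A}_{k+1}^{-1})_{rs}=0$, so $(\widetilde{A}_{k+1}^{-1})_{rs}=0$, and a fortiori $\alpha_{rs}=0$, whenever $r+s$ is odd. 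Equivalently, one could invoke the block-diagonal form $\mathrm{diag}(B_u,C_v)$ of $\widetilde{A}_{k+1}$ obtained just above, since the inverse of a block-diagonal matrix is block-diagonal and conjugating back by $\sigma$ supports it on the even-$(r+s)$ positions.

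I do not expect a genuine obstacle: the argument is a reduction to the already-proven invertibility (indeed positive definiteness) of $\widetilde{A}_{k+1}$. The only steps needing a little care are the exponent bookkeeping in $A_{k+1}=2\varepsilon\,E_\varepsilon\,\widetilde{A}_{k+1}\,E_\varepsilon$, so that the power of $1/\varepsilon$ comes out exactly $r+s-1$ rather than $r+s-2$, and noting explicitly that $\widetilde{A}_{k+1}^{-1}$ has $\varepsilon$-free entries, which is precisely what makes $\alpha_{rs}$ depend on $k$ alone.
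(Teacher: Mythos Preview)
Your proof is correct and cleaner than the paper's, but it takes a genuinely different route. The paper works directly with the adjugate: it writes the $(s,r)$-cofactor as the permutation sum
\[
\sum_{\substack{\sigma\in S_{k+1}\\ \sigma(s)=r}}(-1)^{\mathrm{sgn}(\sigma)}\prod_{j\neq s}a_{j\sigma(j)},
\]
observes that every term carries the factor $\varepsilon^{(k+1)^2-(r+s-1)}$, and divides by ${\rm det}(A_{k+1})=2^{k+1}\varepsilon^{(k+1)^2}{\rm det}(\widetilde{A}_{k+1})$ to extract $\varepsilon^{-(r+s-1)}$. For the vanishing, the paper argues that if $r+s$ is odd then each summand above must contain some factor $a_{s'r'}$ with $s'+r'$ odd, which is zero. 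Your argument, by contrast, packages all the $\varepsilon$-bookkeeping into the single identity $A_{k+1}=2\varepsilon\,E_\varepsilon\,\widetilde{A}_{k+1}\,E_\varepsilon$ and reads off the inverse at once; and your $\Lambda$-conjugation symmetry gives the checkerboard vanishing of $\widetilde{A}_{k+1}^{-1}$ without touching cofactors. The paper's approach is more hands-on and requires no prior matrix identities beyond the adjugate formula; yours is more structural, avoids tracking exponents through a permutation sum, and makes transparent why $\alpha_{rs}$ depends only on $k$ (it is literally $\tfrac12$ times an entry of the fixed matrix $\widetilde{A}_{k+1}^{-1}$).
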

\begin{proof}
Denote by $A_{k+1}^*$ the adjugate matrix of $A_{k+1}$. The first statement is easily seen from $A_{k+1}^{-1}=\frac{1}{{\rm det}(A_{k+1})}A_{k+1}^*$,  ${\rm det}(A_{k+1})=2^{k+1}\varepsilon^{(k+1)^2}{\rm det}(\widetilde{A}_{k+1})$ and the fact that the cofactor of $a_{sr}$ equals
$$\sum_{\substack{\sigma\in S_{k+1}\\ \sigma(s)=r}}(-1)^{{\rm sgn}(\sigma)}a_{1\sigma(1)}a_{2\sigma(2)}\cdots a_{(s-1)\sigma(s-1)}a_{(s+1)\sigma(s+1)}\cdots a_{(k+1)\sigma(k+1)}$$
which is $\varepsilon^{(k+1)^2-(s+r-1)}$ multiplied by a real number. Furthermore, if $r+s$ is an odd number, then each term in the above summation must contain some $a_{s'r'}$ as a factor such that $s'+r'$ is also odd. This implies that $\alpha_{rs}=0$.
\end{proof}

Since $f(x)$ has $(n+1)$-th derivative on $[a, b]$, for any $0\leq k\leq n$ we may write
$$f(x)=f(x_0)+f'(x_0)(x-x_0)+\cdots+\frac{1}{(k+1)!}f^{(k+1)}(x_0)(x-x_0)^{k+1}+o\big((x-x_0)^{k+1}\big).$$
Set 
$$R_k(x):=f(x)-\big[f(x_0)+f'(x_0)(x-x_0)+\cdots+\frac{1}{k!}f^{(k)}(x_0)(x-x_0)^{k}\big],$$
then 
$$\lim_{x\to x_0}\frac{R_k(x)}{(x-x_0)^{k+1}}=\frac{1}{(k+1)!}f^{(k+1)}(x_0).$$ 

We define a new function
\begin{equation*}
G_k(x)=
\begin{cases}
\frac{R_k(x)}{(x-x_0)^{k+1}}, & \text{ if } x\neq x_0 \\
\frac{1}{(k+1)!}f^{(k+1)}(x_0), & \text{ if } x=x_0 \\
\end{cases}.
\end{equation*}
It is a continuous function and it satisfies $G_k(x)(x-x_0)^{k+1}=R_k(x)$. For $1\leq s\leq k+1$, \begin{align*}
w_s&=\int_{x_0-\varepsilon}^{x_0+\varepsilon}f(x)(x-x_0)^{s-1}{\rm d}x\\
&=\sum_{r=1}^{k+1}\int_{x_0-\varepsilon}^{x_0+\varepsilon}\frac{1}{(r-1)!}f^{(r-1)}(x_0)(x-x_0)^{r+s-2}{\rm d}x+\int_{x_0-\varepsilon}^{x_0+\varepsilon}G_k(x)(x-x_0)^{s+k}{\rm d}x\\
&=\sum_{r=1}^{k+1}\bigg\{\frac{1}{(r-1)!}f^{(r-1)}(x_0)\frac{1}{r+s-1}[\varepsilon^{r+s-1}-(-\varepsilon)^{r+s-1}]\bigg\}+\int_{x_0-\varepsilon}^{x_0+\varepsilon}G_k(x)(x-x_0)^{s+k}{\rm d}x\\
&=\sum_{r=1}^{k+1}\frac{1}{(r-1)!}f^{(r-1)}(x_0)a_{rs}+\int_{x_0-\varepsilon}^{x_0+\varepsilon}G_k(x)(x-x_0)^{s+k}{\rm d}x.
\end{align*}
Denote the unique solution of $A_{k+1}\cdot X=W_{k+1}$ by $(a_{0,\varepsilon}, a_{1,\varepsilon}, \cdots, a_{k,\varepsilon})^{\rm T}$, then for any $0\leq i\leq k$,
$$a_{i,\varepsilon}-\frac{1}{i!}f^{(i)}(x_0)=\sum_{s=1}^{k+1}\bigg\{\alpha_{(i+1)s}(\frac{1}{\varepsilon})^{s+i}\int_{x_0-\varepsilon}^{x_0+\varepsilon}G_k(x)(x-x_0)^{s+k}{\rm d}x\bigg\}.$$

We are left to estimate the absolute value of $a_{i,\varepsilon}-\frac{1}{i!}f^{(i)}(x_0)$. Since $G_k(x)$ is continuous and hence is bounded over $[a, b]$, we may suppose that $\mid G_k(x)\mid \leq M$ over $[a, b]$ for some positive number $M$. Then 
\begin{align*}
\mid a_{i,\varepsilon}-\frac{1}{i!}f^{(i)}(x_0)\mid &\leq \sum_{s=1}^{k+1}\mid \alpha_{(i+1)s}\mid\cdot M\cdot (\frac{1}{\varepsilon})^{s+i}\int_{x_0-\varepsilon}^{x_0+\varepsilon}\mid (x-x_0)^{s+k}\mid{\rm d}x\\
&=\sum_{s=1}^{k+1}\mid \alpha_{(i+1)s}\mid\cdot 2M\cdot (\frac{1}{\varepsilon})^{s+i}\int_{x_0}^{x_0+\varepsilon} (x-x_0)^{s+k}{\rm d}x\\
&=\sum_{s=1}^{k+1}\mid \alpha_{(i+1)s}\mid\cdot \frac{2M}{s+k+1}\cdot \varepsilon^{k+1-i}
\end{align*}
which tends to $0$ as $\varepsilon\to 0$. This completes the proof of Theorem~\ref{A}.

\begin{rem}\label{D}
(a). The relation between Taylor expansion and the best $L_2$-approximation by polynomials is much clearer for analytic complex functions. For example, assume that $f$ is a complex function which is analytic in the open unit disk $D: \mid z\mid<1$ and continuous on $\overline{D}: \mid z\mid\leq 1$, then for any $k\geq 0$, the $k$-th truncation of the Taylor expansion of $f$ at $z_0=0$ is the best $L_2$-approximation of $f$ on the unit circle $C: \mid z\mid=1$. That is 
$$\int_C \mid f(z)-S_k(z)\mid^2\mid {\rm d}z\mid\leq \int_C \mid f(z)-P(z)\mid^2\mid {\rm d}z\mid,\qquad \forall P(z)\in \C[z]_{\leq k},$$
where $S_k(z)=\sum_{i=0}^k\frac{f^{(i)}(0)}{i!}z^i$ and $\C[z]_{\leq k}$ is the set of complex polynomials of degree at most $k$. This is simply because $\{1, z, z^2, \cdots, z^k\}$ is an orthogonal basis with respect the $L_2$ hermitian inner product on $C$, i.e.
$$\langle z^p, z^q\rangle=\int_C z^p\overline{z}^q\mid {\rm d}z\mid=0,\qquad \text{ if }p\neq q.$$
This statement is also true if the line integral (integral on $C$) is replaced by a surface integral (integral on $\overline{D}$), see \cite[Section 6.1, 6.5]{Wal}.

(b). Replacing the $L_2$-norm by the $L_\infty$-norm
$$\| f(x)-P(x)\|_\infty:=\max_{a\leq x\leq b} \mid f(x)-P(x)\mid,$$
one can ask whether the local Chebyshev best consistent approximations of $f(x)$ in $\R[x]_{\leq k}$ around $x_0\in (a, b)$ also converge to the $k$-th truncation of the Taylor expansion of $f(x)$ at $x_0$. The answer is YES, because the Chebyshev best consistent approximating polynomial in $\R[x]_{\leq k}$ is a Lagrange interpolating (and also a Newton interpolating) polynomial of degree $k$. As the interpolating interval gets smaller and smaller, the difference quotients appearing in the interpolating polynomials converge to the Taylor coefficients.

(c). The main theorem in this note originates in a simple question from the author's teaching, whether the Taylor coefficients can be explained by the idea of least square approximation. Different from the least square approximation on complex plane, up to now, the author has not found any relevant conclusion or expression about real functions in previous literature. So the author wrote down this note and shared his findings, hoping that the teacher who is teaching related contents of Taylor expansion, could help students understand the significance of Taylor coefficients from the perspective of least square approximation.

(d). Although the author knows that the Taylor polynomial is the limit of the local best $L_2$-approximations (resp. $L_\infty$-approximations), he can not answer the following question: given any $P(x)\in \R[x]_{\leq k}$ which is not equal to the $k$-th Taylor polynomial $S_{k, x_0}(x)$, if there exists a positive number $\delta$ such that for all $0<\varepsilon<\delta$,
$$\Big( \int_{x_0-\varepsilon}^{x_0+\varepsilon} [f(x)-S_{k, x_0}(x)]^2 {\rm d}x \Big)^{1/2}<\Big( \int_{x_0-\varepsilon}^{x_0+\varepsilon} [f(x)-P(x)]^2 {\rm d}x \Big)^{1/2}$$
$$\Big( \text{ resp. }  \max_{{x_0-\varepsilon}\leq x\leq {x_0+\varepsilon}} \mid f(x)-S_{k, x_0}(x)\mid<\max_{{x_0-\varepsilon}\leq x\leq {x_0+\varepsilon}} \mid f(x)-P(x)\mid\Big).$$
\end{rem}

\hspace{5cm} \hrulefill\hspace{5.5cm}

Shun Tang

Beijing Advanced Innovation Center for Imaging Theory and Technology

Academy for Multidisciplinary Studies, Capital Normal University

School of Mathematical Sciences, Capital Normal University

West 3rd Ring North Road 105, 100048 Beijing, P. R. China

E-mail: shun.tang@outlook.com

\end{document}